\renewcommand{\epsilon}{\varepsilon}
\newtheorem{theorem}{Theorem}[section]
\newtheorem{prop}[theorem]{Proposition}
\newtheorem{corollary}[theorem]{Corollary}
\newtheorem{lemma}[theorem]{Lemma}
\newtheorem*{theoremA}{Theorem A}
\theoremstyle{definition}
\theoremstyle{remark}
\newcommand{\Q}{\mathbb Q}
\newcommand{\Z}{\mathbb Z}
\newcommand{\N}{\mathbb N}
\newcommand{\FP}{\operatorname{FP}}
\newcommand{\fpinfty}{{\FP}_{\infty}}
\newcommand{\cohom}[3]{H^{{\raise1pt\hbox{$\scriptstyle#1$}}}(#2\>\!,#3)}
\newcommand{\tatecohom}[3]{\widehat H^{{\raise1pt\hbox{$\scriptstyle#1$}}}(#2\>\!,#3)}
\newcommand{\Cohom}[3]%
  {H^{{\raise1pt\hbox{$\scriptstyle#1$}}}\big(#2\>\!,#3\big)}
\newcommand{\Tatecohom}[3]%
  {\widehat H^{{\raise1pt\hbox{$\scriptstyle#1$}}}\big(#2\>\!,#3\big)}
\newcommand{\homol}[3]{H_{{\lower1pt\hbox{$\scriptstyle#1$}}}(#2\>\!,#3)}
\newcommand{\homolog}[2]{H_{{\lower1pt\hbox{$\scriptstyle#1$}}}(#2)}
\newcommand{\epi}{\twoheadrightarrow}
\newcommand{\Fit}{\operatorname{Fitt}}
\title{Finitary Group Cohomology and Group Actions on Spheres}
\author{Martin Hamilton}
\address{Department of Mathematics, University of Glasgow, University Gardens, Glasgow G12 8QW, United Kingdom}
\email{m.hamilton@maths.gla.ac.uk}
\subjclass[2000]{20J06 18A22 20E34}
\keywords{cohomology of groups, finitary functors, group actions
on spheres}
\begin{document}

\begin{abstract}

We show that if $G$ is an infinitely generated locally
(polycyclic-by-finite) group with cohomology almost everywhere
finitary, then every finite subgroup of $G$ acts freely and
orthogonally on some sphere.

\end{abstract}

\maketitle

\section{Introduction}

In \cite{Finitarycohom} the question of which locally
(polycyclic-by-finite) groups have cohomology almost everywhere
finitary was considered. Recall that a functor is \emph{finitary}
if it preserves filtered colimits (see \S 6.5 in \cite{Leinster};
also \S 3.18 in \cite{AccessibleCategories}). The $n$th cohomology
of a group $G$ is a functor $H^n(G,-)$ from the category of $\Z
G$-modules to the category of abelian groups. If $G$ is a locally
(polycyclic-by-finite) group, then Theorem 2.1 in
\cite{Continuity:cohomologyfunctors} shows that the \emph{finitary
set}
$$\mathscr{F}(G):=\{n\in\N:H^n(G,-)\mbox{ is finitary}\}$$ is
either cofinite or finite. If
  $\mathscr{F}(G)$ is cofinite, we say that $G$ has
  \emph{cohomology almost everywhere finitary}, and if
  $\mathscr{F}(G)$ is finite, we say that $G$ has \emph{cohomology
  almost everywhere infinitary}.

We proved the following results about locally
(polycyclic-by-finite) groups with cohomology almost everywhere
finitary in \cite{Finitarycohom}:

\begin{theorem}\label{theorem A from previous}
  Let $G$ be a locally (polycyclic-by-finite) group. Then $G$ has
  cohomology almost everywhere finitary if and only if $G$ has
  finite virtual cohomological dimension and the normalizer of
  every non-trivial finite subgroup is finitely generated.
\end{theorem}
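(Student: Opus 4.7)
The plan is to prove the two implications separately, using the dictionary that $H^n(G,-)$ preserves filtered colimits whenever $\Z$ admits a partial $\Z G$-projective resolution finitely generated through degree $n$. The converse is subtler, since $H^n$ can be finitary simply by vanishing, so finitariness does not automatically upgrade to an $\fp_n$-condition on $G$.

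For the forward direction, assume $\mathscr F(G)$ is cofinite. Because $G$ is locally polycyclic-by-finite it lies in Kropholler's class $\clh\FF$, and the finiteness machinery available there produces a torsion-free subgroup of finite index with bounded cohomological dimension over $\Z$, giving $\vcd G < \infty$. For a non-trivial finite subgroup $F \leq G$, I would use Shapiro's lemma, $H^n(N_G(F),M) \iso H^n(G, \coind{N_G(F)}{G}{M})$, together with a Mackey-type decomposition of the coinduced module along the directed system of f.g.\ subgroups of $N_G(F)$, to show that $H^n(N_G(F),-)$ is itself finitary in high degrees. If $N_G(F)$ were not finitely generated, expressing it as a strict directed union $\bigcup H_i$ of f.g.\ subgroups would produce a module on which $H^n(N_G(F),-)$ fails to commute with the colimit, contradicting finitariness.

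For the reverse direction, assume $\vcd G < \infty$ and that every $N_G(F)$, with $1 \neq F$ finite, is finitely generated. The strategy is to construct a finite-dimensional $G$-CW model $X$ for the classifying space for proper actions $\eg$, whose cellular chain complex is a $\Z G$-resolution of $\Z$ by permutation modules $\Z[G/F]$ with $F$ finite. The finite-generation of each $N_G(F)$ (equivalently, of the Weyl group $N_G(F)/F$) is precisely the input the Bredon / Davis--L\"uck assembly needs in order to make each $C_p(X)$ finitely generated over $\Z G$, yielding finitariness of $H^n(G,-)$ for all $n > \dim X$.

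The principal obstacle is the reverse direction, specifically controlling the conjugacy classes and intersection lattice of the finite subgroups of $G$ within the locally polycyclic-by-finite framework, so that the individual finitely-generated-normalizer hypotheses can be woven into a single globally finite-type model for $\eg$.
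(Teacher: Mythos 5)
This statement is quoted from the author's earlier paper \cite{Finitarycohom}; the present paper contains no proof of it, so your sketch has to stand on its own, and as it stands both implications have genuine gaps.

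In the forward direction, the decisive step --- ``if $N_G(F)$ were not finitely generated, a strict directed union $\bigcup H_i$ would produce a module on which $H^n(N_G(F),-)$ fails to commute with the colimit'' --- is not a valid principle, and it is exactly the pitfall you flag in your own opening sentence. An infinitely generated group can have $H^n(-,-)$ finitary for all large $n$ simply because those functors vanish: any infinitely generated torsion-free locally polycyclic group of finite Hirsch length has finite cohomological dimension, so its high-degree cohomology functors are identically zero and hence finitary. So finitariness in high degrees cannot detect infinite generation as such. What makes $N_G(F)$ for a \emph{non-trivial} finite $F$ special is the torsion: $F$ forces non-vanishing (Farrell--Tate) cohomology in arbitrarily high degrees, and only by exploiting that non-vanishing can a failure of finite generation of $N_G(F)$ be converted into a failure of finitariness in every sufficiently large degree. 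Your sketch never uses the non-triviality of $F$. Separately, the Shapiro/coinduction device for transferring finitariness from $G$ down to $N_G(F)$ is problematic because $\COIND_H^G$ does not commute with filtered colimits when $[G:H]$ is infinite; indeed, inheritance of the finitary property by arbitrary subgroups is stated in this paper as Corollary \ref{cor B from previous}, a \emph{consequence} of the theorem, so it cannot be assumed en route to proving it.

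In the reverse direction you have correctly located the hard point and then left it open: finite $\vcd$ plus finite generation of each $N_G(F)$ does not obviously yield a cocompact (or even degreewise finitely generated) model for $\eg$ --- one needs at least control of the conjugacy classes of finite subgroups, and producing that control is the substance of the proof, not a technicality. Even granting such a model, you would still need the isotropy spectral sequence argument making each $H^n(G,-)$, $n>\dim X$, finitary from the finitariness of the (finite) stabilizers. As written, the proposal is a reasonable map of the terrain but does not yet contain a proof of either implication.
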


\begin{corollary}\label{cor B from previous}
  Let $G$ be a locally (polycyclic-by-finite) group with
  cohomology almost everywhere finitary. Then every subgroup of
  $G$ also has cohomology almost everywhere finitary.
\end{corollary}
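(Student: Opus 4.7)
The strategy is to verify the two characterizing conditions of Theorem~\ref{theorem A from previous} for an arbitrary subgroup $H\le G$. Note first that $H$ inherits the property of being locally (polycyclic-by-finite) from $G$, so Theorem~\ref{theorem A from previous} is applicable to $H$. It therefore suffices to establish that (a)~$\vcd(H)<\infty$, and (b)~the normalizer in $H$ of every non-trivial finite subgroup is finitely generated.

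For (a) I would use the standard subgroup monotonicity of virtual cohomological dimension: choose a torsion-free subgroup $G_0\le G$ of finite index with $\cd(G_0)=\vcd(G)<\infty$; then $H_0:=H\cap G_0$ is torsion-free of finite index in $H$, and Bieri's monotonicity of $\cd$ for subgroups gives $\cd(H_0)\le\cd(G_0)<\infty$, so $\vcd(H)\le\vcd(G)<\infty$.

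For (b), which is the point where some care is required, let $F$ be a non-trivial finite subgroup of $H$. Since $F$ is also a non-trivial finite subgroup of $G$, Theorem~\ref{theorem A from previous} tells us that $N_G(F)$ is finitely generated. Now $N_G(F)$ is a finitely generated subgroup of the locally (polycyclic-by-finite) group $G$, hence is itself polycyclic-by-finite. The key point (and the only real content of the argument) is that every subgroup of a polycyclic-by-finite group is again polycyclic-by-finite, and in particular finitely generated. Applying this to the subgroup $N_H(F)=N_G(F)\cap H$ of $N_G(F)$ shows that $N_H(F)$ is finitely generated, completing the verification.

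The only potential obstacle is the normalizer condition, since finite generation is not in general inherited by subgroups; but this disappears once one observes that $N_G(F)$ lives inside the Noetherian class of polycyclic-by-finite groups. Everything else is monotonicity of $\vcd$, which is a routine appeal to the Bieri--Serre theory.
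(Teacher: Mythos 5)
Your proof is correct, and it is essentially the intended derivation: the paper quotes this corollary from \cite{Finitarycohom} without reproving it, but the natural route is exactly yours, namely checking both conditions of Theorem~\ref{theorem A from previous} for the subgroup $H$. The key observation that $N_G(F)$, being a finitely generated subgroup of a locally (polycyclic-by-finite) group, is polycyclic-by-finite and hence satisfies the maximal condition on subgroups, so that $N_H(F)=N_G(F)\cap H$ is automatically finitely generated, is precisely the right point, and the $\vcd$ monotonicity step is routine as you say.
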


Recall (see, for example, \cite{ThomasWall1}) that a finite group
acts freely and orthogonally on some sphere if and only if every
subgroup of order $pq$, where $p$ and $q$ are prime, is cyclic. In
this paper, we prove the following result:

\begin{theoremA}
  Let $G$ be an infinitely generated locally
  (polycyclic-by-finite) group with cohomology almost everywhere
  finitary. Then every finite subgroup of $G$ acts freely and
  orthogonally on some sphere.
\end{theoremA}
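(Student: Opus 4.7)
The plan is to argue by contradiction. Suppose some finite subgroup $F\leq G$ does not act freely and orthogonally on any sphere. By the Thomas--Wall criterion recalled in the excerpt, $F$ must contain a non-cyclic subgroup $H$ of order $pq$ for some primes $p$ and $q$; such an $H$ is either elementary abelian of rank two, $H\cong(\Z/p)^2$ (the case $p=q$), or a non-abelian split extension $H\cong C_q\rtimes C_p$ (the case $p\neq q$, so $p\mid q-1$). Since $H$ itself is then a finite subgroup of $G$ failing the criterion, it suffices to rule out the existence of such an $H$ inside $G$.

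I would first apply Theorem~\ref{theorem A from previous} to $G$ to conclude that $N_G(H)$ is finitely generated, hence polycyclic-by-finite, and that the same holds for the normalizer and centralizer in $G$ of every nontrivial subgroup of $H$. Because $G$ is infinitely generated, $N_G(H)$ must have infinite index in $G$ (otherwise $G$ would be finitely generated as a finite-index extension of the finitely generated group $N_G(H)$), so $H$ possesses infinitely many distinct $G$-conjugates.

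The key step is to convert this abundance of conjugates, together with the finite generation of all these normalizers, into a contradiction. I would consider the normal closure $K:=\langle H^{G}\rangle$, which by Corollary~\ref{cor B from previous} has cohomology almost everywhere finitary, so Theorem~\ref{theorem A from previous} applies to $K$ as well. If $K$ is finitely generated, it is polycyclic-by-finite and normal in $G$, and one would try to derive a contradiction by iterating the analysis in $G/K$ or by showing that the finitely many finitely generated subgroups $N_G(L)$, as $L$ ranges over the nontrivial subgroups of $H$, already jointly cover $G$. If $K$ is infinitely generated, one exploits the very restrictive structural fact that $K$ is generated by conjugates of the specific small group $H$ to force $N_K(H)$, guaranteed to be finitely generated by Theorem~\ref{theorem A from previous} applied to $K$, to nevertheless be all of $K$---a contradiction.

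I expect the principal obstacle to lie in carrying out this final step. The mere existence of many conjugates of an arbitrary finite subgroup, each with finitely generated normalizer, is insufficient to force $G$ itself to be finitely generated; the non-cyclic structure of $H$ must enter in an essential way. A case analysis according to the two isomorphism types of $H$ is therefore unavoidable: in the elementary abelian case, one would exploit the $p+1$ distinct order-$p$ subgroups of $H$, each with its own finitely generated centralizer containing $H$, while in the non-abelian case the characteristic normal Sylow $q$-subgroup of $H$ together with the non-trivial $C_p$-action plays the analogous role. Producing the combinatorial/structural argument that closes each of these cases is the heart of the proof and will likely draw on finer information about normalizers in locally polycyclic-by-finite groups of finite virtual cohomological dimension than is explicitly recorded in~\cite{Finitarycohom}.
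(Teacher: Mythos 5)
You have set up the right frame (contradiction via a non-cyclic subgroup $Q$ of order $pq$, the two isomorphism types, and the fact that Theorem~\ref{theorem A from previous} makes every $N_G(K)$ for $1\neq K\leq Q$ finitely generated), and your instinct that the $p+1$ subgroups of order $p$ must be exploited is exactly right. But your proposal stops precisely where the proof has to happen, and the route you sketch --- analysing the normal closure $\langle Q^G\rangle$ and trying to force it to be finitely generated or to coincide with a normalizer --- is not the one that closes the argument. Two essential ingredients are missing.

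First, a structural reduction: from the fact that $G$ acts on a finite-dimensional contractible complex with finite stabilizers and bounded orders of finite subgroups (the content behind Theorem~\ref{theorem A from previous}), one shows $G$ is elementary amenable of finite Hirsch length, and hence (after killing the finite radical, via Hillman--Linnell and Wehrfritz) has a \emph{characteristic} torsion-free soluble subgroup $S$ of finite index. This gives you a canonical torsion-free abelian piece on which $Q$ acts, namely the centre $A=\zeta(F)$ of the Fitting subgroup $F$ of $S$: being characteristic in $G$, it is normalized by $Q$, so $A\rtimes Q$ is a subgroup of $G$ and inherits cohomology almost everywhere finitary.

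Second, the actual mechanism by which the non-cyclic structure of $Q$ enters: the identity in $\Z Q$
\begin{equation*}
\sum_{i=0}^{p}\widehat{E_i}=\widehat{Q}+p\cdot 1
\end{equation*}
(where $E_0,\dots,E_p$ are the $p+1$ subgroups of order $p$ when $Q\cong(\Z/p)^2$, with the analogous identity in the non-abelian case). Applied to $a\in A$ this gives $p\cdot A\subseteq\sum_i A^{E_i}+A^{Q}$, and each fixed-point subgroup $A^{K}$ sits inside the finitely generated group $N_G(K)$, so $pA$ --- and hence $A$, by torsion-freeness --- is finitely generated. This is the step your proposal identifies as the ``heart'' but does not supply; note that it is a computation inside one fixed copy of $Q$ acting on a torsion-free module, not a counting argument over the $G$-conjugates of $Q$. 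From there one still needs to propagate finite generation upward: $\zeta(F)$ finitely generated forces $F$ finitely generated (a torsion-free nilpotent group of finite Hirsch length with finitely generated centre is finitely generated, via an upper-finite radical argument), and then $S/F$ is finitely generated because $S/\Fit(S)$ is virtually crystallographic, contradicting the hypothesis that $G$ is infinitely generated. Without these pieces the proposal is a statement of intent rather than a proof.
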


Note that we cannot remove the "infinitely generated" restriction;
as, for example, every finite group is of type $\fpinfty$ and so
has $n$th cohomology functors finitary for all $n$, by a result of
Brown (Corollary to Theorem $1$ in \cite{BrownPaper}).

\subsection{Acknowledgements}

I would like to thank my research supervisor Peter Kropholler for
all the help he has given me with this paper. I would also like to
thank Frank Quinn for suggesting that a result like Theorem A
should be true.

\section{Proof}

The following Proposition sets the scene for proving Theorem A:

\begin{prop}\label{soluble-by-finite result}
  Let $G$ be a locally (polycyclic-by-finite) group with
  cohomology almost everywhere finitary. Then $G$ has a
  characteristic subgroup $S$ of finite index, such that $S$ is
  torsion-free soluble of finite Hirsch length.
\end{prop}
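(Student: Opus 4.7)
The plan is to unpack Theorem~\ref{theorem A from previous}, which tells me that $G$ has finite virtual cohomological dimension and that the normalizer in $G$ of every non-trivial finite subgroup is finitely generated, and then combine these two facts with the local structure of $G$ in order to produce $S$.

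The first step controls Hirsch length. Setting $n = \vcd(G)$, every finitely generated subgroup $H \leq G$ is polycyclic-by-finite; since the Hirsch length of a polycyclic-by-finite group coincides with its vcd, one has $h(H) = \vcd(H) \leq \vcd(G) = n$. Thus $G$ has Hirsch length at most $n$ in the sense that its finitely generated subgroups do.

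Next, I would produce a torsion-free soluble subgroup of finite index. Finite vcd directly supplies a torsion-free subgroup $V \leq G$ of finite index, and being locally polycyclic of Hirsch length at most $n$, every finitely generated subgroup of $V$ is torsion-free polycyclic of Hirsch length at most $n$ and hence of derived length at most $n$. Therefore $V^{(n)} = 1$, so $V$ is soluble of derived length and Hirsch length at most $n$. Replacing $V$ by its core $V_G = \bigcap_{g \in G} g V g^{-1}$ — which has only finitely many conjugates since $[G : N_G(V)] \leq [G:V]$ — I obtain a normal, torsion-free, soluble subgroup of finite index in $G$.

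The final step is to upgrade $V_G$ to a characteristic subgroup. The candidate is $S = \bigcap_{\varphi \in \Aut(G)} \varphi(V_G)$, which is characteristic by construction and torsion-free soluble as a subgroup of $V_G$. Finiteness of the index $[G:S]$ reduces to showing that $G$ has only finitely many normal subgroups of index $[G:V_G]$, and this is the main obstacle: for a non-finitely-generated group it can fail in general, and overcoming it is where the normalizer hypothesis from Theorem~\ref{theorem A from previous} enters. That hypothesis is designed to exclude precisely the pathological locally finite examples (for instance, infinite direct products of finite simple groups) in which this finiteness statement would be violated, so I would use it to bound the set of normal subgroups of fixed finite index and conclude that $S$ has finite index in $G$.
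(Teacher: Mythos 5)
There are two genuine gaps, one of which is an actual error. The first is in your solubility step: from ``$V$ is torsion-free and locally (polycyclic-by-finite)'' you conclude that every finitely generated subgroup of $V$ is torsion-free \emph{polycyclic}, hence of derived length at most $n$, hence $V^{(n)}=1$. But a torsion-free polycyclic-by-finite group need not be polycyclic, or even soluble: by the Auslander--Kuranishi theorem every finite group, for instance $A_5$, is the holonomy group of some Bieberbach group $B$, and such a $B$ is torsion-free, finitely generated and abelian-by-finite, yet surjects onto $A_5$, so $B^{(k)}\neq 1$ for every $k$. Such a $B$ even satisfies the hypotheses of the proposition (it is polycyclic-by-finite, hence of type $\FP_\infty$, hence has all cohomology functors finitary, and is trivially locally (polycyclic-by-finite)); since $B$ is itself torsion-free you could legitimately take $V=B$, and then $V^{(n)}=1$ is simply false. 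Torsion-freeness alone cannot produce solubility here; some structural input is unavoidable.

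The second gap is the one you flag yourself: upgrading the normal subgroup $V_G$ to a characteristic one. The intersection $\bigcap_{\varphi\in\Aut(G)}\varphi(V_G)$ is characteristic, but you give no argument that it has finite index, and the suggestion that the normalizer condition of Theorem \ref{theorem A from previous} ``bounds the set of normal subgroups of fixed finite index'' is not substantiated -- that condition concerns normalizers of finite subgroups, and it is not clear how it controls finite-index normal subgroups of an infinitely generated group. The paper sidesteps both problems at once by a different route: it produces a finite-dimensional contractible $G$-complex with finite stabilizers, deduces that $G$ is elementary amenable of finite Hirsch length via the Hillman--Linnell bound, observes that the locally finite radical $\tau(G)$ is finite because the orders of finite subgroups are bounded, and then quotes Wehrfritz's extension of Mal'cev's theorem, which directly supplies a poly(torsion-free abelian) -- in particular torsion-free soluble -- \emph{characteristic} subgroup of finite index. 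To complete your approach you would need a substitute for that structural theorem; neither gap closes by the elementary means you propose.
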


\begin{proof}
  By Theorem 2.1 in
\cite{Continuity:cohomologyfunctors} we know that there is a
finite-dimensional contractible $G$-CW-complex $X$ on which $G$
acts with finite isotropy groups, and that there is a bound on the
orders of the finite subgroups of $G$.

We know that the rational cohomological dimension of $G$ is
bounded above by the dimension of $X$ (see, for example,
\cite{Deltapaper}), so $G$ has finite rational cohomological
dimension. Recall that the class of elementary amenable groups is
the class generated from the finite groups and $\Z$ by the
operations of extension and increasing union (see, for example,
\cite{HillmanLinnell}), so $G$ is elementary amenable. According
to \cite{HillmanLinnell}, the Hirsch length of an elementary
amenable group is bounded above by its rational cohomological
dimension, so $G$ has finite Hirsch length.

Let $\tau(G)$ denote the join of the locally finite normal
subgroups of $G$. As there is a bound on the orders of the finite
subgroups of $G$, this implies that $\tau(G)$ is finite. Replacing
$G$ with $G/\tau(G)$, we may assume that $\tau(G)=1$.

Now $G$ is an elementary amenable group of finite Hirsch length,
so it follows from a minor extension of a theorem by Mal'cev (see
Wehrfritz's paper \cite{WehrfritzElAm}) that $G/\tau(G)=G$ has a
poly (torsion-free abelian) characteristic subgroup of finite
index.

\end{proof}

Before proving Theorem A, we need four Lemmas:

\begin{lemma}\label{abelian-by-finite case}
  Let $Q$ be a non-cyclic group of order $pq$, where $p$ and $q$
  are prime, and let $A$ be a $\Z$-torsion-free $\Z Q$-module such
  that the group $A\rtimes Q$ has cohomology almost everywhere
  finitary. Then $A$ is finitely generated.
\end{lemma}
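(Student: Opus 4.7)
The plan is to apply Theorem~\ref{theorem A from previous} to $G := A\rtimes Q$ in order to force the fixed-point subgroups $A^H$ to be finitely generated, and then to exploit an identity in $\Z Q$ that writes a prime multiple of $1$ as a combination of trace elements of the non-trivial proper subgroups of $Q$. This will show $\ell A \subseteq B$ for some finitely generated subgroup $B \le A$ and some prime $\ell$ dividing $|Q|$, and the finiteness of $A/\ell A$ will then force $A$ itself to be finitely generated.

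First, I would observe that $G = A \rtimes Q$ is locally (polycyclic-by-finite), since every finitely generated subgroup lies in $A_0 \rtimes Q$ for some finitely generated $A_0 \le A$, and such a semidirect product is polycyclic-by-finite. Theorem~\ref{theorem A from previous} then gives that $G$ has finite virtual cohomological dimension and that $N_G(H)$ is finitely generated for every non-trivial finite subgroup $H \le G$. A direct multiplication in the semidirect product shows
\[
N_G(H) \;=\; A^H \rtimes N_Q(H)
\]
for each $H \le Q$, so each $A^H$ with $H \ne 1$ has finite index in a finitely generated group and is therefore finitely generated. Proposition~\ref{soluble-by-finite result}, applied to $G$, also supplies that $A$ has finite $\Q$-rank $r$; a standard lifting argument (any $\mathbb F_\ell$-linearly independent family in $A/\ell A$ lifts to a $\Q$-linearly independent family in $A\otimes\Q$) then bounds $\dim_{\mathbb F_\ell}(A/\ell A) \le r$ for every prime $\ell$, so $A/\ell A$ is finite.

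The crucial step is the identity in $\Z Q$. Write $N_K := \sum_{k \in K} k$ for each subgroup $K \le Q$; the two possibilities for the non-cyclic group $Q$ of order $pq$ give two distinct identities. When $Q = C_p \times C_p$, there are $p+1$ subgroups $H_0,\dots,H_p$ of order $p$, and since each non-identity element lies in exactly one $H_i$ while $1$ lies in all $p+1$ of them,
\[
\sum_{i=0}^{p} N_{H_i} \;=\; p\cdot 1 \;+\; N_Q.
\]
When $Q$ is non-abelian (so $p<q$), let $P_1,\dots,P_q$ be the $q$ Sylow $p$-subgroups and $L$ the unique Sylow $q$-subgroup; counting the elements of $Q$ by order yields
\[
\sum_{i=1}^{q} N_{P_i} \;=\; q\cdot 1 \;+\; N_Q \;-\; N_L.
\]
Setting $B := \sum_i A^{H_i}$ in the abelian case and $B := \sum_i A^{P_i} + A^L$ in the non-abelian case, $B$ is a finite sum of finitely generated subgroups of $A$, hence itself finitely generated. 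Applying the appropriate identity to an arbitrary $a \in A$, and using $N_K a \in A^K$ together with $A^Q \subseteq A^K$ for every $K \le Q$, places $pa$ (respectively $qa$) inside $B$. Therefore $A/B$ is a quotient of $A/\ell A$ for some prime $\ell \mid |Q|$, which is finite; so $A$ is an extension of the finitely generated group $B$ by a finite group, and is itself finitely generated.

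The main obstacle is locating the $\Z Q$-identity above: this is precisely where the hypothesis that $Q$ is non-cyclic enters in an essential way, since the combinatorics of the distinct minimal non-trivial subgroups of $Q$ (meeting pairwise only in $\{1\}$) is what produces a prime multiple of $1$ on the right-hand side. Once the identity is in hand, the remainder of the argument is routine.
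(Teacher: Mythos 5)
Your proof is correct and takes essentially the same route as the paper: the identical trace-element identities in $\Z Q$ (writing $p\cdot 1$, resp.\ $q\cdot 1$, in terms of the $\widehat{K}$ for the minimal subgroups $K\leq Q$), combined with Theorem~\ref{theorem A from previous} to make each $A^K$ finitely generated via $N_G(K)$. The only (inessential) difference is the concluding step: the paper finishes directly from $pA\subseteq B$ because multiplication by $p$ is injective on the torsion-free group $A$, so $A\iso pA$ is a subgroup of the finitely generated abelian group $B$; you instead detour through the finiteness of $A/\ell A$, which also works but requires the additional (available) input that $A$ has finite rank.
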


\begin{proof}
  We write $G:=A\rtimes Q$.

  For any $K\leq Q$ we write $\widehat{K}$ for the element of $\Z
  Q$ given by $$\widehat{K}:=\sum_{k\in K} k.$$ Notice that
  $\widehat{K}.A$ is contained in the set of $K$-invariant
  elements $A^K$ of $A$.

  There are two cases to consider:

  If $Q$ is abelian, then $p=q$ and $Q$ has $p+1$ subgroups
  $E_0,\ldots,E_p$ of order $p$. We have the following equation in
  $\Z Q$: $$\sum_{i=0}^p \widehat{E_i}=\widehat{Q}+p.1,$$ so it
  follows that for any $a\in A$ $$p.a=\sum_{i=0}^p
  \widehat{E_i}.a-\widehat{Q}.a\in\sum_{i=0}^p A^{E_i}+A^Q$$ and
  hence $$p.A\subseteq \sum_{i=0}^p A^{E_i}+A^Q.$$ If $K$ is
  non-trivial, then it follows from Theorem \ref{theorem A from
  previous} that $N_G(K)$ is finitely generated. Then, as $A^K\leq
  N_G(K)$, it follows that $A^K$ is also finitely generated. Hence
  we see that $p.A$ is finitely generated, and as $A$ is
  torsion-free, we conclude that $A$ is finitely generated.

  If $Q$ is non-abelian, then $p\neq q$, and without loss of generality we may assume that $p<q$. Then
  $Q$ has one subgroup $F$ of order $q$ and $q$ subgroups
  $H_0,\ldots,H_{q-1}$ of order $p$. We have the following
  equation in $\Z Q$:
  $$\sum_{i=0}^{q-1}\widehat{H_i}+\widehat{F}=\widehat{Q}+q.1$$
  and the proof continues as above.
\end{proof}

Recall (see, for example, \S 10.4 in \cite{Rob2}) that a group $G$
is \emph{upper-finite} if and only if every finitely generated
homomorphic image of $G$ is finite. The class of upper-finite
groups is closed under extensions and homomorphic images. Also
recall (see \S 10.4 in \cite{Rob2}) that the \emph{upper-finite
radical} of a group $G$ is the subgroup generated by all of its
upper-finite normal subgroups, and is itself upper-finite.

\begin{lemma}\label{upp-fin tensor product lemma}
  Let $A$ and $B$ be abelian groups. If $A$ is upper-finite, then
  $A\otimes B$ is upper-finite.
\end{lemma}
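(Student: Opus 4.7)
The plan is to reduce to the case where $B$ is finitely generated, and then use the structure theorem for finitely generated abelian groups together with the closure properties of the class of upper-finite groups.

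First I would take an arbitrary finitely generated quotient $\pi: A\otimes B \twoheadrightarrow C$ and produce a finitely generated subgroup $B_0 \leq B$ such that the composition $A\otimes B_0 \to A\otimes B \twoheadrightarrow C$ is still surjective. To see this, pick finitely many generators $c_1,\ldots,c_n$ of $C$; each $c_j$ has a preimage that is a finite sum of simple tensors $\sum_k a_{jk}\otimes b_{jk}$, so taking $B_0$ to be the subgroup of $B$ generated by the finitely many $b_{jk}$ that appear does the job. (Equivalently, $A\otimes B = \varinjlim A\otimes B_0$ over the directed system of finitely generated $B_0 \leq B$, and filtered colimits of surjections are surjections.) Hence it suffices to show $A\otimes B_0$ is upper-finite whenever $B_0$ is a finitely generated abelian group.

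Next, by the structure theorem for finitely generated abelian groups, $B_0$ decomposes as a finite direct sum of cyclic groups, so $A\otimes B_0$ is a finite direct sum of groups of the form $A\otimes \Z \cong A$ and $A\otimes (\Z/m\Z)\cong A/mA$. Each summand of the first type is upper-finite by hypothesis, and each summand of the second type is a homomorphic image of $A$ and hence upper-finite, since the class of upper-finite groups is closed under homomorphic images. Because this class is also closed under extensions, an iterated extension argument shows that the finite direct sum $A\otimes B_0$ is upper-finite.

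Finally, $C$ is a homomorphic image of $A\otimes B_0$, so $C$ itself is upper-finite; being also finitely generated, $C$ is a finitely generated homomorphic image of itself, hence finite by the definition of upper-finite. This shows every finitely generated quotient of $A\otimes B$ is finite, i.e.\ $A\otimes B$ is upper-finite. There is no real obstacle here; the only step requiring any thought is the reduction to finitely generated $B_0$, which is a standard consequence of the fact that tensor product commutes with directed colimits.
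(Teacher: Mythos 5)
Your proof is correct, but it takes a genuinely different route from the paper's. The paper's argument is two lines: for each $b\in B$ the subgroup $A\otimes b$ is a homomorphic image of $A$ (via $a\mapsto a\otimes b$), hence upper-finite; since $A\otimes B$ is abelian, these subgroups are normal, and $A\otimes B$ is generated by them, so it is upper-finite by the fact (recalled just before the lemma, from \S 10.4 of Robinson) that the join of upper-finite normal subgroups --- the upper-finite radical --- is itself upper-finite. You instead avoid the radical fact entirely: you reduce an arbitrary finitely generated quotient $C$ to one of $A\otimes B_0$ with $B_0$ finitely generated (a correct use of the compatibility of $\otimes$ with directed colimits; note the map $A\otimes B_0\to A\otimes B$ need not be injective, but you only need surjectivity onto $C$, which you have), then invoke the structure theorem to write $A\otimes B_0$ as a finite direct sum of copies of $A$ and $A/mA$, and finish with closure under extensions and homomorphic images. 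What each buys: the paper's proof is shorter and needs no finiteness reduction, but leans on the radical property of arbitrary joins; yours is longer but uses only the two most elementary closure properties of the class, at the cost of importing the structure theorem. Both are complete proofs.
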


\begin{proof}
  If $b\in B$, then $A\otimes b$ is a homomorphic image of $A$ and
  hence is upper-finite. Then as $A\otimes B$ is generated by all
  the $A\otimes b$ it is also upper-finite.
\end{proof}

\begin{lemma}\label{derived subgp of upp-fin nilp gp}
  Let $G$ be an upper-finite nilpotent group. Then its derived
  subgroup $G'$ is also upper-finite.
\end{lemma}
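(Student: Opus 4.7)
The plan is to use the lower central series of $G$ together with the standard fact that in any group the commutator map induces surjections
\[
G^{\mathrm{ab}}\otimes \gamma_i(G)/\gamma_{i+1}(G)\epi \gamma_{i+1}(G)/\gamma_{i+2}(G),
\]
where $\gamma_i(G)$ denotes the $i$th term of the lower central series of $G$ (so $\gamma_1(G)=G$ and $\gamma_2(G)=G'$). Since $G$ is nilpotent, this series terminates in finitely many steps, say at $\gamma_{c+1}(G)=1$.

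First I would observe that $G^{\mathrm{ab}}=G/G'$ is upper-finite, because it is a homomorphic image of $G$ and the class of upper-finite groups is closed under homomorphic images. Now I would argue by induction on $i\ge 1$ that each abelian factor $\gamma_i(G)/\gamma_{i+1}(G)$ is upper-finite. The base case $i=1$ is the previous observation. For the inductive step, assuming $\gamma_i(G)/\gamma_{i+1}(G)$ is upper-finite, Lemma \ref{upp-fin tensor product lemma} says the abelian group $G^{\mathrm{ab}}\otimes \gamma_i(G)/\gamma_{i+1}(G)$ is upper-finite, and since this group maps onto $\gamma_{i+1}(G)/\gamma_{i+2}(G)$ via the commutator map, it follows that $\gamma_{i+1}(G)/\gamma_{i+2}(G)$ is upper-finite as well.

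Finally, $G'=\gamma_2(G)$ admits the finite normal series
\[
G'=\gamma_2(G)\supseteq\gamma_3(G)\supseteq\cdots\supseteq\gamma_{c+1}(G)=1
\]
whose successive quotients $\gamma_i(G)/\gamma_{i+1}(G)$, for $2\le i\le c$, are all upper-finite by the induction above. Since the class of upper-finite groups is closed under extensions, a straightforward induction on the length of this series shows that $G'$ is upper-finite, as required.

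There isn't really a serious obstacle here; the only point that needs care is verifying the commutator surjection $G^{\mathrm{ab}}\otimes\gamma_i(G)/\gamma_{i+1}(G)\epi\gamma_{i+1}(G)/\gamma_{i+2}(G)$, but this is entirely standard (it follows from the commutator identities $[xy,z]\equiv[x,z][y,z]$ and $[x,yz]\equiv[x,y][x,z]$ modulo $\gamma_{i+2}(G)$, which show that the map is bilinear and hence factors through the tensor product).
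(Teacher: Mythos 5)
Your argument is correct and follows essentially the same route as the paper: both proofs show each lower central factor $\gamma_i(G)/\gamma_{i+1}(G)$ is upper-finite via commutator-induced surjections from tensor products involving $G/G'$ (the paper uses the $i$-fold tensor power of $G/G'$ at once, you iterate one tensor factor at a time), then conclude by closure under extensions. No issues.
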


\begin{proof}
  As $G$ is upper-finite, it follows that $G/G'$ is also
  upper-finite.

  As $G$ is a nilpotent group, it has a finite lower central
  series
  $$G=\gamma_1(G)\geq\gamma_2(G)\geq\cdots\geq\gamma_k(G)=1,$$
  where $\gamma_2(G)=G'$.

  For each $i$ there is an epimorphism $$\underbrace{G/G'\otimes\cdots\otimes
  G/G'}_i\epi\gamma_i(G)/\gamma_{i+1}(G),$$ and as $\underbrace{G/G'\otimes\cdots\otimes
  G/G'}_i$ is upper-finite, from Lemma \ref{upp-fin tensor product
  lemma}, we see that each $\gamma_i(G)/\gamma_{i+1}(G)$ is
  upper-finite. Then, as the class of upper-finite groups is
  closed under extensions, we conclude that $G'$ is also
  upper-finite.

\end{proof}

\begin{lemma}\label{centre of nilp fg implies gp fg}
  Let $G$ be a torsion-free nilpotent group of finite Hirsch length. If the
  centre $\zeta(G)$ of $G$ is finitely generated, then $G$ is finitely
  generated.
\end{lemma}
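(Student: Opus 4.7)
The plan is to proceed by induction on the Hirsch length $h(G)$. The base case $h(G)=0$ forces $G=1$ and is trivial. For the inductive step, if $G=\zeta(G)$ the result is immediate, so assume $G$ is non-abelian; then $\zeta(G)$ is a non-trivial finitely generated torsion-free abelian group, hence of positive Hirsch length, and $G/\zeta(G)$ is torsion-free nilpotent of strictly smaller Hirsch length (using the standard fact that the centre of a torsion-free nilpotent group is isolated). Provided I can show that $\zeta_2(G)/\zeta(G)$, which is the centre of $G/\zeta(G)$, is finitely generated, the inductive hypothesis applied to $G/\zeta(G)$ will give that $G/\zeta(G)$ is finitely generated, and then $G$ itself is finitely generated as an extension of finitely generated groups.

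The key step is therefore to prove that $\zeta_2(G)/\zeta(G)$ is finitely generated. For $z\in\zeta_2(G)$ the commutator $[g,z]$ lies in $\zeta(G)$ for every $g\in G$, so the identity $[g_1g_2,z]=[g_1,z]^{g_2}[g_2,z]$ collapses to $[g_1,z][g_2,z]$, showing that $\phi_z\colon g\mapsto[g,z]$ is a homomorphism $G\to\zeta(G)$. Since $\phi_z$ vanishes on $\zeta(G)$ and $\zeta(G)$ is abelian, it factors through $(G/\zeta(G))^{ab}$. Sending $z$ to the induced map defines a homomorphism from $\zeta_2(G)$ into $\Hom((G/\zeta(G))^{ab},\zeta(G))$ with kernel exactly $\zeta(G)$, yielding an injection
$$\zeta_2(G)/\zeta(G)\;\hookrightarrow\;\Hom\bigl((G/\zeta(G))^{ab},\zeta(G)\bigr).$$

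It remains to verify that the right-hand side is finitely generated. Because $G/\zeta(G)$ is torsion-free nilpotent of finite Hirsch length, its abelianisation is a torsion-free abelian group of finite rank (again because the commutator subgroup of a torsion-free nilpotent group is isolated). Writing $\zeta(G)\cong\Z^r$, it suffices to show that $\Hom(A,\Z)$ is finitely generated whenever $A$ is a torsion-free abelian group of finite rank $k$; but such an $A$ contains a free abelian subgroup isomorphic to $\Z^k$ with torsion quotient, and applying $\Hom(-,\Z)$ to the corresponding short exact sequence together with the vanishing $\Hom(\text{torsion},\Z)=0$ gives an embedding $\Hom(A,\Z)\hookrightarrow\Hom(\Z^k,\Z)=\Z^k$, which is manifestly finitely generated. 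The only non-routine step is recognising that the commutator pairing produces the required injection; once that is in place, the remainder is bookkeeping about standard features of torsion-free nilpotent groups and of finite-rank torsion-free abelian groups.
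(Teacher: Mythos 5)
Your proof is correct, and it takes a genuinely different route from the paper's. The paper argues by contradiction via the upper-finite radical $K$ of $G$: it quotes Robinson (Lemma 10.45) to get $G/K$ finitely generated, adapts Lemma 10.44 to show each iterated commutator $[K,{}^mG]$ is upper-finite, and then notes that the last non-trivial one lies in $\zeta(G)$, hence is finitely generated and upper-finite, hence finite, hence trivial by torsion-freeness --- forcing $K=1$. You instead run an induction on Hirsch length, reducing to $G/\zeta(G)$ by means of the commutator-pairing embedding $\zeta_2(G)/\zeta(G)\hookrightarrow\Hom\bigl((G/\zeta(G))^{\mathrm{ab}},\zeta(G)\bigr)$ together with the fact that $\Hom(A,\Z)$ is finitely generated when $A$ has finite torsion-free rank. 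Your argument is self-contained and avoids the machinery of upper-finite groups entirely (which the paper needs anyway for its other lemmas), at the cost of being somewhat longer; both are valid. One small correction: the derived subgroup of a torsion-free nilpotent group need not be isolated, so $(G/\zeta(G))^{\mathrm{ab}}$ can have torsion --- for instance $\langle x,y,w\mid [x,y]=w^{2},\ [x,w]=[y,w]=1\rangle$ is torsion-free nilpotent with abelianisation $\Z^{2}\times\Z/2$. This does not harm your argument, since any homomorphism into the torsion-free group $\zeta(G)$ kills the torsion subgroup; just replace the abelianisation by its quotient modulo torsion, which is torsion-free of finite rank, before applying the step $\Hom(A,\Z)\hookrightarrow\Z^{k}$.
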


\begin{proof}
  Let $K$ be the upper-finite radical of $G$. As $G$ is torsion-free nilpotent of finite Hirsch length, it is a
  special case of Lemma 10.45 in \cite{Rob2} that $G/K$ is
  finitely generated. Suppose that $K\neq 1$.

  Following an argument of Robinson (Lemma 10.44 in \cite{Rob2}) we see that for each $g\in G$, $[K,g]K'/K'$ is a homomorphic image of $K$, and so is
  upper-finite, so therefore $[K,G]/K'$ is upper-finite. Then, as $K'$ is
  upper-finite, from Lemma \ref{derived subgp of upp-fin nilp gp},
  we see that $[K,G]$ is also upper-finite. Similarly, we see by induction
  that $[K,{}^mG]=[K,\underbrace{G,\ldots,G}_m]$ is upper-finite.

  Choose the largest $m$ such that $[K,{}^mG]\neq 1$. Then
  $[K,{}^mG]\subseteq\zeta(G)$, so $[K,{}^mG]$ is finitely
  generated, and hence finite. Then, as $G$ is torsion-free, we
  see that $[K,{}^mG]=1$, which is a contradiction. Therefore,
  $K=1$, and so $G$ is finitely generated.

\end{proof}

We can now prove Theorem A.

\begin{proof}[Proof of Theorem A]

Let $G$ be an infinitely generated locally (polycyclic-by-finite)
group with cohomology almost everywhere finitary. It follows from
Proposition \ref{soluble-by-finite result} that $G$ has a
characteristic subgroup $S$ of finite index such that $S$ is
torsion-free soluble of finite Hirsch length.

Suppose that not every subgroup of $G$ acts freely and
orthogonally on some sphere, so there is a non-cyclic subgroup $Q$
of order $pq$, where $p$ and $q$ are prime.

As $S$ is a torsion-free soluble group of finite Hirsch length, it
is linear over the rationals (see, for example, \cite{Wehrfritz}),
so by a result of Gruenberg (see Theorem 8.2 of \cite{Wehrfritz})
the Fitting subgroup $F:=\Fit(S)$ of $S$ is nilpotent. Now the
centre $\zeta(F)$ of $F$ is a characteristic subgroup of $G$, so
we can consider the group $\zeta(F)Q$. It then follows from
Corollary \ref{cor B from previous} that $\zeta(F)Q$ has
cohomology almost everywhere finitary. Then, by Lemma
\ref{abelian-by-finite case}, we see that $\zeta(F)$ is finitely
generated. It then follows from Lemma \ref{centre of nilp fg
implies gp fg} that $F$ is finitely generated.

Now, let $K$ be the subgroup of $S$ containing $F$ such that
$K/F=\tau(S/F)$. As $S$ is linear over $\Q$, we see that $S/F$ is
also linear over $\Q$, and as locally finite $\Q$-linear groups
are finite (see, for example, Theorem 9.33 in \cite{Wehrfritz}),
we conclude that $K/F$ is finite. An argument of Zassenhaus in
15.1.2 of \cite{RobGroup} shows that $S/K$ is maximal
abelian-by-finite; that is, crystallographic. Hence $S/F$ is
finitely generated, so we conclude that $S$ is finitely generated,
a contradiction.

\end{proof}

\bibliographystyle{amsplain}

\bibliography{spheres}

\end{document}